\numberwithin{equation}{section}
\newtheorem{thm}{Theorem}[section]
\theoremstyle{definition}
\theoremstyle{remark}
\newtheorem{rem}{Remark}[section]
\begin{document}

\title[A determinantal expression of Delannoy numbers]
{A determinantal expression and a recursive relation of the Delannoy numbers}

\author[F. Qi]{Feng Qi}
\address{Institute of Mathematics, Henan Polytechnic University, Jiaozuo 454010, Henan, China; College of Mathematics, Inner Mongolia University for Nationalities, Tongliao 028043, Inner Mongolia, China; School of Mathematical Sciences, Tianjin Polytechnic University, Tianjin 300387, China}
\email{\href{mailto: F. Qi <qifeng618@gmail.com>}{qifeng618@gmail.com}, \href{mailto: F. Qi <qifeng618@hotmail.com>}{qifeng618@hotmail.com}, \href{mailto: F. Qi <qifeng618@qq.com>}{qifeng618@qq.com}}
\urladdr{\url{https://qifeng618.wordpress.com}}

\dedicatory{Dedicated to people facing and fighting COVID-19}

\begin{abstract}
In the paper, by a general and fundamental, but non-extensively circulated, formula for derivatives of a ratio of two differentiable functions and by a recursive relation of the Hessenberg determinant, the author finds a new determinantal expression and a new recursive relation of the Delannoy numbers. Consequently, the author derives a recursive relation for computing central Delannoy numbers in terms of related Delannoy numbers.
\end{abstract}

\subjclass[2010]{Primary 11B83; Secondary 05A15, 05A10, 11B75, 11C20, 60G50}%

\keywords{Delannoy number; central Delannoy number; determinantal expression; recursive relation}%

\thanks{This paper was typeset using \AmS-\LaTeX}%

\maketitle
\tableofcontents

\section{Motivations}
The Delannoy numbers, denoted by $D(p,q)$ for $p,q\ge0$, form an array of positive integers which are related to lattice paths enumeration and other problems in combinatorics. For more information on their history and status in combinatorics, please refer to~\cite{Cyril-Sylviane-2005-why} and closely related references therein.
\par
In~\cite[Section~2]{Cyril-Sylviane-2005-why} and~\cite{MR3952588}, the explicit formulas
\begin{equation*}
D(p,q)=\sum_{i=0}^{p}\binom{p}{i}\binom{q}{i}2^i
\quad\text{and}\quad
D(p,q)=\sum_{i=0}^{q}\binom{q}{i}\binom{p+q-i}{q}
\end{equation*}
were given.
The first few values of the Delannoy numbers $D(p,q)$ for $0\le p,q\le8$ are listed in Table~\ref{Delannoy-9X9}.
\begin{table}[hbtp]
\caption{Values of the Delannoy numbers $D(p,q)$ for $0\le p,q\le8$} \label{Delannoy-9X9}
\centering
\begin{tabular}{ccccccccc}
 \textbf{1} & 1 & 1 & 1 & 1 & 1 & 1 & 1 & 1 \\
 1 & \textbf{3} & 5 & 7 & 9 & 11 & 13 & 15 & 17 \\
 1 & 5 & \textbf{13} & 25 & 41 & 61 & 85 & 113 & 145 \\
 1 & 7 & 25 & \textbf{63} & 129 & 231 & 377 & 575 & 833 \\
 1 & 9 & 41 & 129 & \textbf{321} & 681 & 1289 & 2241 & 3649 \\
 1 & 11 & 61 & 231 & 681 & \textbf{1683} & 3653 & 7183 & 13073 \\
 1 & 13 & 85 & 377 & 1289 & 3653 & \textbf{8989} & 19825 & 40081 \\
 1 & 15 & 113 & 575 & 2241 & 7183 & 19825 & \textbf{48639} & 108545 \\
 1 & 17 & 145 & 833 & 3649 & 13073 & 40081 & 108545 & \textbf{265729}
\end{tabular}
\end{table}
It is well known~\cite{MR3952588} that the Delannoy numbers $D(p,q)$ satisfy a simple recurrence
\begin{equation}\label{recurs-relat-3-term}
D(p,q)=D(p-1,q)+D(p-1,q-1)+D(p,q-1)
\end{equation}
and can be generated by
\begin{equation*}
\frac1{1-x-y-xy}=\sum_{p,q=0}^\infty D(p,q)x^py^q.
\end{equation*}
When taking $n=p=q$, the numbers $D(n)=D(n,n)$ are known~\cite{MR3952588} as central Delannoy numbers which have the generating function
\begin{equation}\label{G(x)-Delannoy-GF}
\frac1{\sqrt{1-6x+x^2}\,}=\sum_{n=0}^\infty D(n)x^k
=1+3x+13x^2+63x^3+\dotsm.
\end{equation}
The first nine central Delannoy numbers $D(n)$ for $0\le n\le 8$ are listed as blacked numbers in Table~\ref{Delannoy-9X9}.
\par
We found on the \href{https://mathscinet.ams.org/mathscinet/}{MathSciNet} on 27 March 2020 that the phrase ``Delannoy number'' appeared in the titles of the references~\cite{MR3908608, MR2410472, MR2919231, MR3106100, MR3734640, MR3908599, MR2978852, MR2293650, MR2721475, MR3515810, MR3831408, MR3749744, MR2887889, MR2901372, MR2893830, MR2374122, MR1971435, MR2832831, MR3715232, MR2460233, MR1291893, MR3952588}. The Delannoy numbers $D(p,q)$ were also investigated in~\cite{Edwards-Griffiths-2020, MR3168687, Kiselman-15-2, Kiselman-17-2}. The Delannoy numbers $D(p,q)$ have some connections with the Schr\"oder numbers. For some recent results on the Schr\"oder numbers, please refer to~\cite{KJM-706-Turkey.tex, Schroder-Altern.tex, Schroder-Seq-3rd.tex, Integers-2196.tex, IJPA-D-16-00849.tex, MR2598470} and closely related references.
\par
In~\cite[Theorems~1.1 and~1.3]{Delannoy-Cent-P.tex}, considering the generating function~\eqref{G(x)-Delannoy-GF}, among other things, the authors expressed central Delannoy numbers $D(n)$ by an integral
\begin{equation}\label{Delannoy-Integ-Eq}
D(n)=\frac1\pi\int_{3-2\sqrt{2}\,}^{3+2\sqrt{2}\,} \frac1{\sqrt{\bigl(t-3+2\sqrt{2}\,\bigr) \bigl(3+2\sqrt{2}\,-t\bigr)}\,} \frac1{t^{n+1}}\textup{d}t
\end{equation}
and by a determinant
\begin{equation}\label{Delannoy-Det-Eq}
D(n)=(-1)^n
\begin{vmatrix}
a_1 & 1 & 0 & \dotsm &0 &0&0\\
a_2 & a_1 & 1 &\dotsm &0 &0&0\\
a_3 & a_2 & a_1 &\dotsm & 0&0&0\\
\vdots & \vdots & \vdots & \ddots &\vdots&\vdots& \vdots\\
a_{n-2} & a_{n-3} & a_{n-4} &\dotsm &a_1&1& 0\\
a_{n-1} & a_{n-2} & a_{n-3} &\dotsm & a_2&a_1&1\\
a_{n} & a_{n-1} & a_{n-2} &\dotsm &a_3 &a_2& a_1
\end{vmatrix}
\end{equation}
for $n\in\mathbb{N}$, where
\begin{equation*}
a_n=\frac{(-1)^{n+1}}{6^n}\sum_{\ell=1}^n (-1)^{\ell}6^{2\ell} \frac{(2\ell-3)!!}{(2\ell)!!} \binom{\ell}{n-\ell}.
\end{equation*}
Making use of the integral expression~\ref{Delannoy-Integ-Eq}, the authors derived in~\cite{Delannoy-Cent-P.tex} some new analytic properties, including some product inequalities and determinantal inequalities, of central Delannoy numbers $D(n)$.
\par
In combinatorial number theory, it is a significant and meaningful work to express concrete sequences or arrays of integer numbers or polynomials in terms of tridiagonal determinants or the Hessenberg determinants.
In this respect, the Bernoulli numbers and polynomials~\cite{Booth-Nguyen-2008-09, Costabile-Rend-2006, CAM-D-18-00067.tex, 2Closed-Bern-Polyn2.tex, mathematics-131192.tex, Nice-ApBern-Polyn-2v.tex, Turnbull-book-1960}, the Euler numbers and polynomials~\cite{Euler-Polyn-Det-Note.tex, Special-Bell2Euler.tex, Euler-No-3Sum.tex}, (central) Delannoy numbers and polynomials~\cite{Delanoy-No.tex, KJM-706-Turkey.tex, Schroder-Altern.tex, Schroder-Seq-3rd.tex, Integers-2196.tex, IJPA-D-16-00849.tex, UPB-7177.tex, Delannoy-Cent-P.tex}, the Horadam polynomials~\cite{clos-hora-poly-sym.tex}, (generalized) Fibonacci numbers and polynomials~\cite{ipek, Janjic-JIS-2012, Kittappa-LAA-1993, andelic-Fonseca-rev.tex, lematema-1389.tex, UPB-7177.tex, BIMS-D-18-00265.tex}, the Lucas polynomials~\cite{clos-hora-poly-sym.tex}, and the like, have been represented via tridiagonal determinants or the Hessenberg determinants, and consequently many remarkable relations have been obtained. For more information in this area and direction, please refer to~\cite{Hu-Kim-arXiv-2015, Tan-Der-App-Thanks.tex, Fubini-Polyn.tex-v2, AADM-2821.tex, Motzkin-No-Pass.tex, Catalan-GF-Plus.tex, QLG-RACSAM-Ext.tex, KJM-762-Qi-Wang-Guo.tex, 2rdBern-Det-Zhao.tex, Derange-Hess-Det-S.tex} and closely related references therein.
\par
In this paper, by a general and fundamental, but non-extensively circulated, formula for derivatives of a ratio of two differentiable functions in~\cite[p.~40]{Bourbaki-Spain-2004} and by a recursive relation of the Hessenberg determinant in~\cite[p.~222, Theorem]{CollegeMJ-2002-Cahill}, we find a new determinantal expression and a new recursive relation of the Delannoy numbers $D(p,q)$. Consequently, we derive a recursive relation for computing central Delannoy numbers $D(n)$ in terms of related Delannoy numbers $D(p,q)$.

\section{A determinantal expression of the Delannoy numbers}
In this section, by virtue of a general and fundamental, but non-extensively circulated, formula for derivatives of a ratio of two differentiable functions in~\cite[p.~40]{Bourbaki-Spain-2004}, we find a new determinantal expression of the Delannoy numbers $D(p,q)$.

\begin{thm}\label{delannoy-det-exp-thm}
For $p,q\ge0$, the Delannoy numbers $D(p,q)$ can be determinantally expressed by
\begin{equation}\label{Delannoy-det-exp-eq}
D(p,q)=\frac{(-1)^q}{q!}\begin{vmatrix}L_{(q+1)\times1}(p) & M_{(q+1)\times q}(p)\end{vmatrix}_{(q+1)\times(q+1)},
\end{equation}
where
\begin{align*}
L_{(q+1)\times1}(p)&=\begin{pmatrix}\langle p\rangle_0,\langle p\rangle_1,\dotsc,\langle p\rangle_q\end{pmatrix}^T,\\
M_{(q+1)\times q}(p)&=\begin{pmatrix}(-1)^{i-j}\binom{i-1}{j-1}\langle p+1\rangle_{i-j}
\end{pmatrix}_{\substack{1\le i\le q+1\\ 1\le j\le q}},\\
\langle z\rangle_n&=
\begin{cases}
z(z-1)\dotsm(z-n+1), & n\ge1;\\
1,& n=0
\end{cases}
\end{align*}
is known as the $n$-th falling factorial of the number $z\in\mathbb{C}$, and $T$ denotes the transpose of a matrix. Consequently, central Delannoy numbers $D(n)$ for $n\ge0$ can be determinantally expressed as
\begin{equation}\label{central-recurs-eq}
D(n)=\frac{(-1)^n}{n!}\begin{vmatrix}L_{(n+1)\times1}(n) & M_{(n+1)\times n}(n)\end{vmatrix}_{(n+1)\times(n+1)}.
\end{equation}
\end{thm}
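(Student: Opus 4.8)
The plan is to collapse the two-variable problem to a one-variable one and then invoke the quotient-derivative formula of~\cite[p.~40]{Bourbaki-Spain-2004}. First I would rewrite the bivariate generating function as a geometric series in $x$,
\begin{equation*}
\frac1{1-x-y-xy}=\frac1{(1-y)-(1+y)x}=\sum_{p=0}^\infty\frac{(1+y)^p}{(1-y)^{p+1}}x^p,
\end{equation*}
and, comparing this with $\frac1{1-x-y-xy}=\sum_{p,q\ge0}D(p,q)x^py^q$, read off for each fixed $p\ge0$ the single-variable generating function $\sum_{q=0}^\infty D(p,q)y^q=\frac{(1+y)^p}{(1-y)^{p+1}}$. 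Consequently $D(p,q)=\frac1{q!}\lim_{y\to0}\frac{\textup{d}^q}{\textup{d}y^q}\frac{(1+y)^p}{(1-y)^{p+1}}$, so that the task is now reduced to evaluating a single $q$-th derivative of an explicit rational function at the origin.

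Next I would put $f(y)=(1+y)^p$ and $g(y)=(1-y)^{p+1}$ and apply the formula expressing $(f/g)^{(q)}$ as $(-1)^q g^{-(q+1)}$ times the $(q+1)\times(q+1)$ lower-Hessenberg determinant whose first column is $\bigl(f,f',\dotsc,f^{(q)}\bigr)^T$ and whose $(k+1)$-st column, for $1\le k\le q$, carries $\binom{i-1}{k-1}g^{(i-k)}$ in its $i$-th entry, with the convention that entries with a negative order of differentiation vanish. Evaluating at $y=0$, I would use $g(0)=1$, so that the factor $g^{-(q+1)}$ disappears, together with
\begin{equation*}
f^{(k)}(0)=p(p-1)\dotsm(p-k+1)=\langle p\rangle_k,\qquad
g^{(k)}(0)=(-1)^k(p+1)p\dotsm(p-k+2)=(-1)^k\langle p+1\rangle_k.
\end{equation*}

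Substituting these into the determinant turns its first column into $L_{(q+1)\times1}(p)$ exactly, while the $i$-th entry of the $j$-th of the remaining columns becomes $(-1)^{i-j}\binom{i-1}{j-1}\langle p+1\rangle_{i-j}$ once the columns are re-indexed by $1\le i\le q+1$ and $1\le j\le q$; since $\binom{i-1}{j-1}=0$ whenever $i<j$, this single expression already reproduces the triangular zero pattern of the Hessenberg matrix and hence equals $M_{(q+1)\times q}(p)$. Pulling out the overall sign $(-1)^q$ and the factor $1/q!$ then yields~\eqref{Delannoy-det-exp-eq}, and specializing to $p=q=n$ gives~\eqref{central-recurs-eq} at once.

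The differentiations producing the falling factorials are routine; the delicate part is the final matching step --- keeping track of the signs coming both from $g^{(k)}(0)$ and from the $(-1)^q$ in Bourbaki's formula, verifying that the binomial weights and the shifted differentiation orders along the Hessenberg columns line up with $\binom{i-1}{j-1}\langle p+1\rangle_{i-j}$, and confirming that the conventions $\langle z\rangle_0=1$ and $g(0)=1$ together make the prefactor $g^{-(q+1)}$ drop out cleanly.
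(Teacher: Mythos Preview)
Your proposal is correct and follows essentially the same route as the paper: both arguments reduce $D(p,q)$ to the $q$-th $y$-derivative of $(1+y)^p/(1-y)^{p+1}$ at the origin and then apply the Bourbaki quotient-derivative determinant with $u=(1+y)^p$, $v=(1-y)^{p+1}$. The only cosmetic difference is that you extract the $x^p$-coefficient via a geometric-series expansion before applying the formula, whereas the paper computes $\partial_x^p$ explicitly, carries the variable $x$ through the determinant, and only sends $x,y\to0$ at the end; your ordering is slightly cleaner but not a different method.
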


\begin{proof}
We recall a general and fundamental, but non-extensively circulated, formula for derivatives of a ratio of two differentiable functions.
Let $u(t)$ and $v(t)\ne0$ be two $n$-th differentiable functions for $n\in\mathbb{N}$. Exercise~5) in~\cite[p.~40]{Bourbaki-Spain-2004} reads that the $n$-th derivative of the ratio $\frac{u(t)}{v(t)}$ can be computed by
\begin{equation}\label{Sitnik-Bourbaki-reform}
\frac{\textup{d}^n}{\textup{d}x^n}\biggl[\frac{u(t)}{v(t)}\biggr]
=(-1)^n\frac{\bigl|W_{(n+1)\times(n+1)}(t)\bigr|}{v^{n+1}(t)},
\end{equation}
where $U_{(n+1)\times1}(t)$ is an $(n+1)\times1$ matrix whose elements satisfy $u_{k,1}(t)=u^{(k-1)}(t)$ for $1\le k\le n+1$, $V_{(n+1)\times n}(t)$ is an $(n+1)\times n$ matrix whose elements meet $v_{i,j}(t)=\binom{i-1}{j-1}v^{(i-j)}(t)$ for $1\le i\le n+1$ and $1\le j\le n$, and $|W_{(n+1)\times(n+1)}(t)|$ is the determinant of the $(n+1)\times(n+1)$ matrix
\begin{equation*}
W_{(n+1)\times(n+1)}(t)=\begin{pmatrix}U_{(n+1)\times1}(t) & V_{(n+1)\times n}(t)\end{pmatrix}_{(n+1)\times(n+1)}.
\end{equation*}
\par
It is easy to see that
\begin{equation*}
\frac{\partial^p}{\partial x^p}\biggl(\frac1{1-x-y-xy}\biggr)
=\frac{p!(1+y)^p}{[1-x-(1+x)y]^{p+1}}.
\end{equation*}
Making use of the formula~\eqref{Sitnik-Bourbaki-reform} gives
\begin{gather*}
\frac{\partial^{p+q}}{\partial y^q\partial x^p}\biggl(\frac1{1-x-y-xy}\biggr)
=p!\frac{\partial^{q}}{\partial y^q}\frac{(1+y)^p}{[1-x-(1+x)y]^{p+1}}\\
=p!\frac{(-1)^q}{[1-x-(1+x)y]^{(p+1)(q+1)}}\\
\times\left|
\begin{matrix}
(1+y)^p & [1-x-(1+x)y]^{p+1}\\
\langle p\rangle_1(1+y)^{p-1} &(-1)^1\langle p+1\rangle_1(1+x)^1[1-x-(1+x)y]^{p}\\
\langle p\rangle_2(1+y)^{p-2} &(-1)^2\langle p+1\rangle_2(1+x)^2[1-x-(1+x)y]^{p-1}\\
\langle p\rangle_3(1+y)^{p-3} &(-1)^3\langle p+1\rangle_3(1+x)^3[1-x-(1+x)y]^{p-2}\\
\vdots &\vdots\\
\langle p\rangle_{q-2}(1+y)^{p-q+2} &(-1)^{q-2}\langle p+1\rangle_{q-2}(1+x)^{q-2}[1-x-(1+x)y]^{p-q+3}\\
\langle p\rangle_{q-1}(1+y)^{p-q+1} &(-1)^{q-1}\langle p+1\rangle_{q-1}(1+x)^{q-1}[1-x-(1+x)y]^{p-q+2}\\
\langle p\rangle_q(1+y)^{p-q} &(-1)^q\langle p+1\rangle_q(1+x)^q[1-x-(1+x)y]^{p-q+1}
\end{matrix}\right.\\
\begin{matrix}
0\\
\binom{1}{1}[1-x-(1+x)y]^{p+1}\\
\binom{2}{1}(-1)^1\langle p+1\rangle_1(1+x)^1[1-x-(1+x)y]^{p}\\
\binom{3}{1}(-1)^2\langle p+1\rangle_2(1+x)^2[1-x-(1+x)y]^{p-1}\\
\vdots\\
\binom{q-2}{1}(-1)^{q-3}\langle p+1\rangle_{q-3}(1+x)^{q-3}[1-x-(1+x)y]^{p-q+4}\\
\binom{q-1}{1}(-1)^{q-2}\langle p+1\rangle_{q-2}(1+x)^{q-2}[1-x-(1+x)y]^{p-q+3}\\
\binom{q}{1}(-1)^{q-1}\langle p+1\rangle_{q-1}(1+x)^{q-1}[1-x-(1+x)y]^{p-q+2}
\end{matrix}\\
\begin{matrix}
0&\dotsm\\
0&\dotsm\\
\binom{2}{2}[1-x-(1+x)y]^{p+1}&\dotsm\\
\binom{3}{2}(-1)^1\langle p+1\rangle_1(1+x)^1[1-x-(1+x)y]^{p}&\dotsm\\
\vdots&\ddots\\
\binom{q-2}{2}(-1)^{q-4}\langle p+1\rangle_{q-4}(1+x)^{q-4}[1-x-(1+x)y]^{p-q+5}&\dotsm\\
\binom{q-1}{2}(-1)^{q-3}\langle p+1\rangle_{q-3}(1+x)^{q-3}[1-x-(1+x)y]^{p-q+4}&\dotsm\\
\binom{q}{2}(-1)^{q-2}\langle p+1\rangle_{q-2}(1+x)^{q-2}[1-x-(1+x)y]^{p-q+3}&\dotsm
\end{matrix}\\
\begin{matrix}
0                                                            \\
0                                                            \\
0                                                            \\
0                                                            \\
\vdots                                                       \\
\binom{q-2}{q-2}[1-x-(1+x)y]^{p+1}                           \\
\binom{q-1}{q-2}(-1)\langle p+1\rangle_1(1+x)[1-x-(1+x)y]^{p}\\
\binom{q}{q-2}\langle p+1\rangle_2(1+x)^2[1-x-(1+x)y]^{p-1}
\end{matrix}\\
\left.
\begin{matrix}
                                                          0\\
                                                          0\\
                                                          0\\
                                                          0\\
                                                     \vdots\\
                             0\\
\binom{q-1}{q-1}[1-x-(1+x)y]^{p+1}\\
\binom{q}{q-1}(-1)\langle p+1\rangle_1(1+x)[1-x-(1+x)y]^{p}
\end{matrix}\right|\\
\to(-1)^qp!
\left|
\begin{matrix}
\langle p\rangle_0     & (-1)^0\langle p+1\rangle_0        & 0                                 \\
\langle p\rangle_1     & (-1)^1\langle p+1\rangle_1        & \binom{1}{1}(-1)^0\langle p+1\rangle_0          \\
\langle p\rangle_2     & (-1)^2\langle p+1\rangle_2        & \binom{2}{1}(-1)^1\langle p+1\rangle_1          \\
\langle p\rangle_3     & (-1)^3\langle p+1\rangle_3        & \binom{3}{1}(-1)^2\langle p+1\rangle_2           \\
\vdots                 & \vdots                            & \vdots           \\
\langle p\rangle_{q-2} & (-1)^{q-2}\langle p+1\rangle_{q-2}& \binom{q-2}{1}(-1)^{q-3}\langle p+1\rangle_{q-3} \\
\langle p\rangle_{q-1} & (-1)^{q-1}\langle p+1\rangle_{q-1}& \binom{q-1}{1}(-1)^{q-2}\langle p+1\rangle_{q-2} \\
\langle p\rangle_q     & (-1)^{q}\langle p+1\rangle_q      & \binom{q}{1}(-1)^{q-1}\langle p+1\rangle_{q-1}
\end{matrix}\right.\\
\left.
\begin{matrix}
0 & \dotsm &                                                     0&                                                          0\\
0& \dotsm &                                                      0&                                                          0\\
\binom{2}{2}(-1)^0\langle p+1\rangle_0& \dotsm &                 0&                                                          0\\
\binom{3}{2}(-1)^1\langle p+1\rangle_1& \dotsm &                 0&                                                          0\\
\vdots& \ddots &                                                 \vdots&                                                     \vdots\\
\binom{q-2}{2}(-1)^{q-4}\langle p+1\rangle_{q-4}& \dotsm &       \binom{q-2}{q-2}(-1)^0\langle p+1\rangle_0&                             0\\
\binom{q-1}{2}(-1)^{q-3}\langle p+1\rangle_{q-3}& \dotsm &       \binom{q-1}{q-2}(-1)^1\langle p+1\rangle_1 &\binom{q-1}{q-1}(-1)^0\langle p+1\rangle_0\\
\binom{q}{2}(-1)^{q-2}\langle p+1\rangle_{q-2} & \dotsm & \binom{q}{q-2}(-1)^2\langle p+1\rangle_2 &\binom{q}{q-1}(-1)^1\langle p+1\rangle_1
\end{matrix}\right|
\end{gather*}
as $x,y\to0$. Consequently, we have
\begin{multline*}
D(p,q)=\frac{1}{p!q!}\frac{\partial^{p+q}}{\partial y^q\partial x^p}\biggl(\frac1{1-x-y-xy}\biggr)\\
=\frac{(-1)^q}{q!}
\begin{vmatrix}\begin{pmatrix}\langle p\rangle_{ij}\end{pmatrix}_{\substack{0\le i\le q\\ j=1}}
&\begin{pmatrix}(-1)^{i-j}\binom{i-1}{j-1}\langle p+1\rangle_{i-j}
\end{pmatrix}_{\substack{1\le i\le q+1\\ 1\le j\le q}}
\end{vmatrix}_{(q+1)\times(q+1)}.
\end{multline*}
The determinantal expression~\eqref{Delannoy-det-exp-eq} is thus proved.
\par
From~\eqref{Delannoy-det-exp-eq}, we readily see that, when $n=p=q$, central Delannoy numbers $D(n)$ for $n\ge0$ can be expressed as~\eqref{central-recurs-eq}.
The proof of Theorem~\ref{delannoy-det-exp-thm} is complete.
\end{proof}

\begin{rem}
Since the symmetric property $D(p,q)=D(q,p)$, from the determinantal expression~\eqref{Delannoy-det-exp-eq} in Theorem~\ref{delannoy-det-exp-thm}, it follows that
\begin{multline*}
\begin{vmatrix}L_{(q+1)\times1}(p) & M_{(q+1)\times q}(p)\end{vmatrix}_{(q+1)\times(q+1)}\\
=(-1)^{p-q}\frac{q!}{p!}\begin{vmatrix}L_{(p+1)\times1}(q) & M_{(p+1)\times p}(q)\end{vmatrix}_{(p+1)\times(p+1)}.
\end{multline*}
For example, when $p=8$ and $q=3$, we have
\begin{equation*}
\begin{vmatrix}L_{4\times1}(8) & M_{4\times3}(8)\end{vmatrix}_{4\times4}
=-\frac{3!}{8!}\begin{vmatrix}L_{9\times1}(3) & M_{9\times8}(3)\end{vmatrix}_{9\times9}.
\end{equation*}
This means that, when computing $D(p,q)$, if $p\ge q$, it would be sufficient to compute the determinant
\begin{equation*}
\begin{vmatrix}L_{(q+1)\times1}(p) & M_{(q+1)\times q}(p)\end{vmatrix}_{(q+1)\times(q+1)}.
\end{equation*}
\end{rem}

\begin{rem}
The determinantal expression~\eqref{central-recurs-eq} is different from and simpler than~\eqref{Delannoy-Det-Eq} established in~\cite[Theorem~1.1]{Delannoy-Cent-P.tex}.
\end{rem}

\section{A recursive relation of the Delannoy numbers}
In this section, by virtue of a recursive relation of the Hessenberg determinant in~\cite[p.~222, Theorem]{CollegeMJ-2002-Cahill}, we will find a new recursive relation of the Delannoy numbers $D(p,q)$.

\begin{thm}\label{Delannoy-recursive-thm}
For $p,q\ge0$, the Delannoy numbers $D(p,q)$ satisfy the recursive relation
\begin{equation}\label{Delannoy-recursive-eq}
D(p,q)=\binom{p}{q} +(-1)^{q-1}\sum_{r=0}^{q-1}(-1)^r\binom{p+1}{q-r}D(p,r).
\end{equation}
Consequently, central Delannoy numbers $D(n)$ for $n\ge0$ satisfy
\begin{equation}\label{Delan-central-delan}
D(n)=1+(-1)^{n+1}\sum_{r=0}^{n-1}(-1)^r\binom{n+1}{r+1}D(n,r).
\end{equation}
\end{thm}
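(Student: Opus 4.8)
The plan is to combine the determinantal expression~\eqref{Delannoy-det-exp-eq} from Theorem~\ref{delannoy-det-exp-thm} with the recursive relation for Hessenberg determinants in~\cite[p.~222, Theorem]{CollegeMJ-2002-Cahill}. Write $H_q$ for the $(q+1)\times(q+1)$ matrix whose determinant stands on the right-hand side of~\eqref{Delannoy-det-exp-eq}, so that $D(p,q)=\frac{(-1)^q}{q!}|H_q|$. First I would record the two structural facts that make the cited theorem applicable. (i)~The matrix $H_q$ is lower Hessenberg: since $\binom{i-1}{j-1}=0$ whenever $i<j$, every entry of $M_{(q+1)\times q}(p)$ strictly above its main diagonal vanishes, hence the only entries of $H_q$ strictly above the superdiagonal are $0$; moreover each superdiagonal entry equals $\binom{i-1}{i-1}\langle p+1\rangle_0=1$. (ii)~The leading $k\times k$ principal submatrix of $H_q$ is precisely $H_{k-1}$, so the family $\{H_k\}_{k\ge0}$ is nested, and by~\eqref{Delannoy-det-exp-eq} applied with $q$ replaced by $k-1$ the corresponding leading principal minor is $|H_{k-1}|=(-1)^{k-1}(k-1)!\,D(p,k-1)$ for $k\ge1$, the order-zero minor being $1$.

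Because every superdiagonal entry of $H_q$ equals $1$, the product of superdiagonal entries occurring in the Cahill recursion collapses to $1$, so that recursion reduces to $|H_q|=\sum_{r=1}^{q+1}(-1)^{q+1-r}\,h_{q+1,r}\,|H_{r-2}|$, where $h_{q+1,r}$ is the $(q+1,r)$ entry of $H_q$ and the $r=1$ summand is understood as $(-1)^{q}\langle p\rangle_q$. Reading off the bottom row of $H_q$ gives $h_{q+1,1}=\langle p\rangle_q$ from the first column and $h_{q+1,r}=(-1)^{q-r+2}\binom{q}{r-2}\langle p+1\rangle_{q-r+2}$ for $2\le r\le q+1$ from the block $M_{(q+1)\times q}(p)$. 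Substituting these together with $|H_{r-2}|=(-1)^{r-2}(r-2)!\,D(p,r-2)$, collecting the powers of $-1$ (the three sign factors multiply to $(-1)^{r-1}$), and invoking the elementary identities $\binom{q}{r-2}(r-2)!\,\langle p+1\rangle_{q-r+2}=q!\binom{p+1}{q-r+2}$ and $\langle p\rangle_q=q!\binom{p}{q}$, the recursion becomes $|H_q|=(-1)^q q!\binom{p}{q}-q!\sum_{s=0}^{q-1}(-1)^s\binom{p+1}{q-s}D(p,s)$ after the reindexing $s=r-2$. Multiplying through by $\frac{(-1)^q}{q!}$ and using $(-1)^{q+1}=(-1)^{q-1}$ gives~\eqref{Delannoy-recursive-eq}. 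Finally, setting $p=q=n$ in~\eqref{Delannoy-recursive-eq} and rewriting $\binom{n+1}{n-r}=\binom{n+1}{r+1}$ yields~\eqref{Delan-central-delan}.

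I expect the only genuine difficulty to be organizational: keeping three index systems aligned, namely the order $q+1$ of the Hessenberg determinant, the row/column labels of its entries, and the orders $r-2$ of the nested sub-minors that encode the smaller Delannoy numbers $D(p,r-2)$. It is also worth transcribing the hypotheses of the cited theorem carefully---lower versus upper Hessenberg, and the exact placement of the product $\prod_{j=r}^{q}h_{j,j+1}$ of superdiagonal entries---since it is exactly that product degenerating to $1$ that makes the recursion collapse to the clean alternating sum used above. As a sanity check one can verify the first few instances against Table~\ref{Delannoy-9X9}, for example $D(p,1)=2p+1$ and $D(p,2)=2p^2+2p+1$ obtained from~\eqref{Delannoy-recursive-eq}.
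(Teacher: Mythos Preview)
Your proposal is correct and follows essentially the same approach as the paper: both apply the Cahill recursion for Hessenberg determinants \eqref{CollegeMJ-2002-Cahill-Thm} to the determinantal expression \eqref{Delannoy-det-exp-eq}, identify $Q_r$ with $(-1)^{r-1}(r-1)!\,D(p,r-1)$, and simplify. Your write-up is in fact more explicit than the paper's about the Hessenberg structure, the nesting of principal submatrices, and the sign bookkeeping, but the underlying argument is identical.
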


\begin{proof}
Let $Q_0=1$ and
\begin{equation*}
Q_n=
\begin{vmatrix}
e_{1,1} & e_{1,2} & 0 & \dotsc & 0 & 0\\
e_{2,1} & e_{2,2} & e_{2,3} & \dotsc & 0 & 0\\
e_{3,1} & e_{3,2} & e_{3,3} & \dotsc & 0 & 0\\
\vdots & \vdots & \vdots & \vdots & \vdots & \vdots\\
e_{n-2,1} & e_{n-2,2} & e_{n-2,3} & \dotsc & e_{n-2,n-1} & 0 \\
e_{n-1,1} & e_{n-1,2} & e_{n-1,3} & \dotsc & e_{n-1,n-1} & e_{n-1,n}\\
e_{n,1} & e_{n,2} & e_{n,3} & \dotsc & e_{n,n-1} & e_{n,n}
\end{vmatrix}
\end{equation*}
for $n\in\mathbb{N}$. In~\cite[p.~222, Theorem]{CollegeMJ-2002-Cahill}, it was proved that the sequence $Q_n$ for $n\ge0$ satisfies $Q_1=e_{1,1}$ and
\begin{equation}\label{CollegeMJ-2002-Cahill-Thm}
Q_n=\sum_{r=1}^n(-1)^{n-r}e_{n,r} \Biggl(\prod_{j=r}^{n-1}e_{j,j+1}\Biggr) Q_{r-1}
\end{equation}
for $n\ge2$, where the empty product is understood to be $1$. Replacing the determinant $Q_r$ by $(-1)^{r-1}(r-1)!D(p,r-1)$ in~\eqref{Delannoy-det-exp-eq} for $1\le r\le n$ in the recursive relation~\eqref{CollegeMJ-2002-Cahill-Thm} and simplifying give
\begin{equation*}
D(p,n-1)=\frac{\langle p\rangle_{n-1}}{(n-1)!}
+(-1)^n\sum_{r=2}^n(-1)^r\frac{\langle p+1\rangle_{n-r+1}}{(n-r+1)!}D(p,r-2)
\end{equation*}
which is equivalent to the recursive relation~\eqref{Delannoy-recursive-eq}.
\par
When $n=p=q$ in~\eqref{Delannoy-recursive-eq}, we can see that central Delannoy numbers $D(n)$ satisfy the recursive relation~\eqref{Delan-central-delan}.
The proof of Theorem~\ref{Delannoy-recursive-thm} is complete.
\end{proof}

\begin{rem}
The recursive relation~\eqref{Delannoy-recursive-eq} is different from~\eqref{recurs-relat-3-term}.
\end{rem}

\begin{rem}
The recursive relation~\eqref{Delan-central-delan} demonstrates that we can compute central Delannoy numbers $D(n)$ in terms of the Delannoy numbers $D(p,q)$. However, the Delannoy numbers $D(p,q)$ can not be expressed in terms of central Delannoy numbers $D(n)$.
\end{rem}

\begin{rem}
The recursive relation~\eqref{Delan-central-delan} can be rearranged as
\begin{equation*}
D(n)=1-\sum_{r=0}^{n-1}(-1)^{n-r}\binom{n+1}{r+1}D(n,r)
=1-\sum_{r=1}^n(-1)^r\binom{n+1}{r}D(n,n-r).
\end{equation*}
\end{rem}

\begin{rem}
The recursive relation~\eqref{Delan-central-delan} can also be written as
\begin{align*}
1-D(1)&=\frac{-1}{1!}\langle2\rangle_1D(1,0),\\
1-D(2)&=\frac{-1}{1!}\langle3\rangle_1D(2,1)+\frac{1}{2!}\langle3\rangle_2D(2,0),\\
1-D(3)&=\frac{-1}{1!}\langle4\rangle_1D(3,2)+\frac{1}{2!}\langle4\rangle_2D(3,1) +\frac{-1}{3!}\langle4\rangle_3D(3,0),\\
1-D(4)&=\frac{-1}{1!}\langle5\rangle_1D(4,3)+\frac{1}{2!}\langle5\rangle_2D(4,2) +\frac{-1}{3!}\langle5\rangle_3D(4,1) +\frac{1}{4!}\langle5\rangle_4D(4,0),
\end{align*}
and so on. Consequently, making use of the symmetry relation $D(p,q)=D(q,p)$, we can reformulate the recursive relation~\eqref{Delan-central-delan} as
\begin{equation*}
\begin{pmatrix}
D(1)\\
D(2)\\
D(3)\\
D(4)\\
\vdots\\
D(n-1)\\
D(n)
\end{pmatrix}
=
\begin{pmatrix}
1\\
1\\
1\\
1\\
\vdots\\
1\\
1
\end{pmatrix}
-
M_{n\times n}
\begin{pmatrix}
\frac{-1}{1!}\\
\frac{1}{2!}\\
\frac{-1}{3!}\\
\frac{1}{4!}\\
\vdots\\
\frac{(-1)^{n-1}}{(n-1)!}\\
\frac{(-1)^n}{n!}
\end{pmatrix},
\end{equation*}
where
\begin{gather*}
M_{n\times n}=\left(
\begin{matrix}
\langle2\rangle_1D(0,1)&0&0                                                                        \\
\langle3\rangle_1D(1,2)&\langle3\rangle_2D(0,2)&0                                                  \\
\langle4\rangle_1D(2,3)&\langle4\rangle_2D(1,3)&\langle4\rangle_3D(0,3)                            \\
\langle5\rangle_1D(3,4)&D\langle5\rangle_2(2,4)&\langle5\rangle_3D(1,4)                            \\
\vdots&\vdots&\vdots\\
\langle n\rangle_1D(n-2,n-1)&\langle n\rangle_2D(n-3,n-1)&\langle n\rangle_3D(n-4,n-1)             \\
\langle n+1\rangle_1D(n-1,n)&\langle n+1\rangle_2D(n-2,n)&\langle n+1\rangle_3D(n-3,n)
\end{matrix}\right.\\
\left.
\begin{matrix}
0&\dotsm& 0&0\\
0&\dotsm& 0&0\\
0&\dotsm& 0&0\\
\langle5\rangle_4D(0,4)&\dotsm& 0&0\\
\vdots&\ddots &\vdots&\vdots\\
\langle n\rangle_4D(n-5,n-1)&\dotsm& \langle n\rangle_{n-1}D(0,n-1)&0\\
\langle n+1\rangle_4D(n-4,n)&\dotsm& \langle n+1\rangle_{n-1}D(1,n)&\langle n+1\rangle_nD(0,n)
\end{matrix}\right).
\end{gather*}
\end{rem}

\begin{rem}
The recursive relation~\eqref{Delan-central-delan} should be new. Therefore, the recursive relation~\eqref{Delannoy-recursive-eq} should be new. Furthermore, the determinantal expression~\eqref{Delannoy-det-exp-eq} should also be new. Interesting!
\end{rem}

\begin{rem}
This paper is a revised version of the electronic preprint~\cite{Delanoy-No.tex}.
\end{rem}

\subsection*{Acknowledgements}
The author thanks Christer Oscar Kiselman (Department of Information Technology, Uppsala University, Sweden) for his careful corrections, helpful suggestions, and valuable comments in April 2020 on the original version of this paper.
\par
The author thanks anonymous referees for their careful corrections and valuable comments on the original version of this paper.

\bibliography{Delanoy-No}
\bibliographystyle{mmn}

\end{document}